\newtheorem{thm}{Theorem}[section]
\newtheorem*{thm*}{Theorem}
\newtheorem*{cor*}{Corollary}
\newtheorem*{lem*}{Lemma}
\newtheorem{prop}[thm]{Proposition}
\theoremstyle{definition}
\newtheorem*{defn*}{Definition}
\theoremstyle{remark}
\newcommand{\Q}{\mathbb Q}
\newcommand{\R}{\mathbb R}
\newcommand{\C}{\mathbb C}
\newcommand{\Z}{\mathbb Z}
\newcommand{\PP}{\mathbb P}
\newcommand{\K}{\mathscr K}
\newcommand{\HH}{\mathfrak H}
\newcommand{\I}{\mathrm i}
\DeclareMathOperator{\Tr}{Tr}
\DeclareMathOperator{\kernel}{Ker}
\DeclareMathOperator{\sign}{sign}
\renewcommand{\Im}{\mathrm{Im}\;}
\renewcommand{\Re}{\mathrm{Re}\;}
\def\bbfZ{\mathbb Z}
\author{Anton Mellit \footnote{mellit@gmail.com}}
\title{Elliptic dilogarithms and parallel lines}
\begin{document}

\maketitle

\begin{abstract}
We prove Boyd's conjectures relating Mahler's measures and values of L-functions of elliptic curves in the cases when the corresponding elliptic curve has conductor $14$. 
\end{abstract}

\section{Introduction}
\subsection{Parallel lines}
Boyd's conjectures are identities of the form $m(P)=a\pi^{-2}L(E,2)$, $a\in \Q^\times$, where $P$ is a polynomial in two variables and $E$ is an elliptic curve over $\Q$ (definition of the Mahler measure $m$ follows). By results of \cite{Deninger97}, \cite{Beilinson85}, \cite{Bloch00}, \cite{SS88}, \cite{RodriguezVillegas99} both sides of the conjectured identities are reduced to relations between values of the elliptic dilogarithm. To prove relations between elliptic dilogarithms one usually tries to construct rational functions $f$ such that divisors of both $f$ and $1-f$ are supported on a given set of points.

Let $E/\C$ be an elliptic curve and $Z\subset E(\C)$ be a finite subgroup. Let us realize $E$ as a plane cubic with equation $y^2=x^3+ax+b$ for $a,b\in\C$. For each triple $p,q,r\in Z\setminus \{0\}$ such that $p+q+r=0$ consider the line $l_{p,q,r}$ passing through $p$, $q$, $r$. Let $s_{p,q,r}$ be the slope of this line. Suppose $s_{p,q,r}=s_{p',q',r'}$ for another triple of points, which is equivalent to the lines $l_{p,q,r}$ and $l_{p',q',r'}$ being parallel. Then from equations of these lines one can obtain two functions $f$, $g$ on $E$ such that $f+g=1$ and divisors of $f$ and $g$ are supported on $Z$. Thus we obtain (hopefully a non-trivial) relation between values of the elliptic dilogarithm at points of $Z$.

We propose to search for parallel lines as above in two ways. The first way, dubbed ``breadth-first search'', is to fix $Z = \Z/m \times \Z/m'$ and consider the moduli space of elliptic curves $E$ with embedding $Z\rightarrow E$. Then for any two triples $p,q,r$ and $p',q',r'$ the difference $s_{p,q,r}-s_{p',q',r'}$ is a modular form of weight $1$ on the moduli space, in fact an Eisenstein series, and at the points where the form is zero we obtain a relation. We demonstrate that using this approach, in particular, one can obtain relations from \cite{Bertin04}.

Another approach, which I call ``depth-first search'', is to fix a curve $E$ and consider some big subgroup $Z$ hoping that when $Z$ is big enough some parallel lines will appear. However, this seems to work only for some ``nice'' curves. One of the cases when this does work is presented in this paper.

\subsection{Boyd's conjectures}
Rogers provided a table of relations between Mahler's measures and values of L-functions of elliptic curves of low conductors $11$, $14$, $15$, $20$, $24$, $27$, $32$, $36$ in \cite{Rogers08}. Among these relations some had been proved and some had not. According to Rogers, those relations which involve curves with complex multiplication (conductors $27$, $32$, $36$) were all proved. Except those, only a relation with curve of conductor $11$ was proved. Let us list the relations with curves of conductor $14$.

Let $P\in \C[y,z]$. The Mahler measure of $P$ is defined as
\[
M(P) := (2\pi\I)^{-2} \int_{|y|=|z|=1} \log |P(y,z)| \frac{dy}{y} \; \frac{dz}{z}.
\] 
Denote
\begin{align*}
n(k)&:=M(y^3 + z^3 + 1 - k y z),\\
g(k)&:=M((1+y)(1+z)(y+z) - k y z).\\
\end{align*}

Let $E_{14}$ be the elliptic curve of conductor $14$ with Weierstrass form $y^2 + y x + y = x^3 + 4 x - 6$. It is isomorphic to the modular curve $X^0(14)$ with the pullback of the N\'eron differential $\frac{dx}{2 y + x + 1}$ given by the following eta-product \cite{MO97} 
\[
 f_{14}:=\eta(\tau) \eta(2 \tau) \eta(7 \tau) \eta(14 \tau).
\]

Then $L(E_{14},s)=L(f_{14},s)$, and the relations listed by Rogers are
\begin{align}
 n(-1) &= \frac{7}{\pi^2} L(f_{14},2),\label{eq:conjectures1}\\
 n(5) &= \frac{49}{2\pi^2} L(f_{14},2),\label{eq:conjectures2}\\
 g(1) &= \frac{7}{2\pi^2} L(f_{14},2),\label{eq:conjectures3}\\
 g(7) &= \frac{21}{\pi^2} L(f_{14},2),\label{eq:conjectures4}\\
 g(-8) &= \frac{35}{\pi^2} L(f_{14},2).\label{eq:conjectures5}\\
\end{align}

\subsection{The regulator}\label{the_regulator}
Fix a smooth projective curve $C/\C$. Consider an element $\xi=\sum_i\{f_i,g_i\}\in \Lambda^2 \C(C)^\times$. If the tame symbol of $\xi$ vanishes at every point of $C$, it defines an element in $K_2(C)$. Then its regulator $r_C(\xi)$ is an element of $H^1(C,\R)$ whose value on $[\gamma]\in H_1(C,\Z)$ is 
\[
r_C(\xi)([\gamma]) = \int_\gamma \sum_i log|f_i| d \arg g_i - \log|g_i| d \arg f_i.
\]

Using Poincar\'e duality one can also evaluate the regulator on forms. Namely, let $\omega$ be a holomorphic $1$-form on $C$. The value of the regulator on $\omega$ is defined as follows:
\begin{equation}\label{cup_reg}
\langle r_C(\xi), \omega\rangle := \langle r_C(\xi) \cap \omega, [C]\rangle = 2 \int_C \sum_i \log|f_i| d\arg g_i \wedge \omega.
\end{equation}

Denote by $\K_n$ (resp. $\K_g$) the set of values of the function $\frac{y^3+z^3+1}{yz}$ (resp. $\frac{(1+y)(1+z)(y+z)}{yz}$) on the torus $|y|=|z|=1$. Then, according to Deninger (\cite{Deninger97}, \cite{RodriguezVillegas99}),
for $k\notin \K_n$ (resp. $k\notin \K_g$) one can express $n(k)$ (resp. $g(k)$) as $\frac{1}{2\pi} r_C(\{y,z\})([\gamma])$
for a certain $[\gamma]\in H_1(C,\Z)$, where $C$ is the projective closure of the equation $y^3+z^3+1-k yz=0$ (resp. $(1+y)(1+z)(y+z)-k y z=0$). When $k$ is on the boundary of $\K_n$ (resp. $\K_g$) Deninger's result still applies by continuity.

\subsection{Elliptic dilogarithm}
Now let $C=E$ be an elliptic curve over $\C$. Let $\Z[E(\C)]^-$ be the group of divisors on $E$ modulo divisors of the form $[p] + [-p]$ for $p\in E(\C)$. Define a map from $\Lambda^2 \C(E)^\times$ to $\Z[E(\C)]^-$ by
\[
 \beta: \{f,g\} \rightarrow (f)\ast (g)^-
\]
where ``$\ast$'' and ``$-$'' mean the convolution and the antipode operations on the group of divisors of $E$.
Fix an isomorphism $E\cong\C/\langle 1,\tau \rangle$ for $\tau\in\HH$. Let $u$ be the coordinate on $\C$. Let $x\in E(\C)$, $x=a\tau+b$ for $a,b\in\R$. Put 
\[
 R(\tau,x) = -\frac{\I}{\pi}(\Im{\tau})^2\sum_{(m,n)\in \Z\times\Z \setminus{( 0,0)}}\frac{\sin(2\pi(na-mb))}{(m\tau+n)^2(m\overline\tau+n)},
\]
As in \cite{Zagier90} (it seems that the sign there is wrong), the real part of $R$ is the elliptic dilogarithm $D(\tau,x)$. By a result of Bloch, we have
\[
 \langle r_E(\{f,g\}), du\rangle = R(\tau,(f)\ast (g)^-).
\]
For a holomorphic $1$-form $\omega$ on $E$ put
\[
R_{E,\omega}(x)=\frac{\omega}{du} R(\tau,x).
\]
Then it is trivial to verify that $R_{E,\omega}$ does not depend on the choice of the isomorphism $E\cong\C/\langle 1,\tau \rangle$.

By linearity we extend $R_{E,\omega}$ to $\Z[E(\C)]^-$.

The function $R_{E,\omega}(x)$ satisfies the following properties:
\begin{enumerate}
 \item For any $\lambda\in\C$ $R_{E,\lambda\omega}(x) = \lambda R_{E,\omega}(x)$.
 \item For an isogeny $\varphi:E'\rightarrow E$ and $x\in E(\C)$
\begin{equation}\label{eq:dilog:isog}
 R_{E,\omega}(x) = \sum_{x'\in\varphi^{-1}(x)} R_{E',\varphi^* \omega}(x').
\end{equation}
 \item For a function $f\in \C(E)^\times$, $f\neq 1$, one has $R_{E,\omega}((f)\ast(1-f)^-) = 0$.
\end{enumerate}

We expect that any algebraic relation between $R_{E,\omega}(x)$, where $E,\omega,x$ are defined over $\overline{\Q}$, follows from the relations just listed.

\subsection{Elliptic curves over $\R$}
Let $E$ be an elliptic curve defined over $\R$. Then there is a unique up to sign isomorphism $E\cong\C/\langle 1,\tau \rangle$ which is compatible with the complex conjugation. Then $\tau$ can be chosen so that either $\tau\in\I\R$ or $\tau\in \frac12 + \I\R$. We will write $R_E(x)$ for $R_{E,du}(x) = R(\tau,x)$ (the elliptic dilogarithm).

Let $\xi$ be such that $[\xi]\in K_2(E)$ and suppose $\xi$ is defined over $\R$. Let $\gamma_1$ and $\gamma_2$ be the loops corresponding to the segments $[0,1]$ and $[0,\tau]$ respectively. It can be seen that $r_C(\xi)([\gamma_1])=0$. Therefore we can evaluate
\[
\langle r_E(\xi), du \rangle = r_E(\xi)([\gamma_1]) \int_{\gamma_2} du - r_E(\xi)([\gamma_2]) \int_{\gamma_1} du = -r_E(\xi)([\gamma_2]).
\]
Thus, in general, for the loop $\gamma$ corresponding to the path $[0,a\tau + b]$ we obtain
\[
r_E(\xi)([\gamma]) = -b R_E(\beta(\xi)).
\] 

\subsection{Remarks}

We mention that statements essentially equivalent to some cases of the distribution relation (\ref{eq:dilog:isog}) appeared in \cite{KO05}, \cite{LR07}, \cite{Touafek08}.

In the rest of the paper we prove the conjectures (\ref{eq:conjectures1}-\ref{eq:conjectures5}) and fill some details to what was stated above.

A natural idea (see \cite{GL98}, Lemma 3.29) is to search not only for pairs of parallel lines, but for triples of incident lines passing through a given finite set of points on an elliptic curve. Computer searches performed by the author and F. Brunault indicate that the cases of Boyd's conjectures for conductors $20$, $24$ can be proved by this method.

Using a certain argument communicated to the author by R. de Jeu (unpublished) it is possible to prove that relations in $K_2$ of an elliptic curve coming from all triples of incident lines span all relations. However, this includes lines defined over all number fields passing through all (not necessarily torsion) points. So it is not clear how this may help to study $K_2$ of, say, arbitrary elliptic curve over $\Q$.

\subsection{Acknowledgements}
The author would like to thank W. Zudilin for bringing his attention to the problem. He is also grateful to M. Rogers, F. Brunault, H. Gangl, A. Levin, F. R. Villegas, A. Goncharov and D. Zagier for interesting discussions and to the Max Planck Institute for Mathematics in Bonn for its hospitality and stimulating environment.

\section{Families of elliptic curves}

\subsection{General remarks}
First we note a few facts about the elliptic curves $P(y,z)=0$ where $P$ is a polynomial $y^3+z^3+1-kyz$ or $(1+y)(1+z)(y+z)-kyz$ (these cases are called $n$ and $g$ respectively).
\begin{enumerate}
\item The line at infinity of the projective closure of the affine plane with coordinates $y$, $z$ has a natural choice of homogeneous coordinates, namely $(y:z)$. The point $(1:-1)$ always belongs to $C$, we choose this point as zero for the elliptic curve.
\item The map $(y, z) \rightarrow (z, y)$ is the map $p\rightarrow -p$.
\item In the case $g$ the point $Q=(0,0)$ belongs to the curve. We have $2Q=0$.
\item In the cases $g$ the map $(y,z)\rightarrow(y^{-1}, z^{-1})$ preserves the equation, sends $0$ to $Q$ and does not have fixed points on the curve. Therefore it is the map $p\rightarrow (p+Q)$.
\end{enumerate}

We would like to know when the curve is singular
\begin{prop} The elliptic curve becomes singular in the following cases:
\begin{itemize} 
\item For the family $n$ if $k=3$ or $k^2 + 3k + 9=0$.
\item For the family $g$ if $k\in\{-1,0,8\}$.
\end{itemize}
\end{prop}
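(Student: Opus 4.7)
The plan is to treat the two families separately, reducing each to a standard form whose singular locus can be read off directly.

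For family $n$, I would homogenize to the Hesse pencil $Y^3 + Z^3 + W^3 - k\,YZW = 0$. A singular point $[Y:Z:W]$ must satisfy $3Y^2 = kZW$, $3Z^2 = kYW$, $3W^2 = kYZ$ (with $P=0$ then implied by Euler's identity). Multiplying the three relations gives $27(YZW)^2 = k^3(YZW)^2$; the alternative $YZW = 0$ forces $Y=Z=W=0$ and so is excluded, whence $k^3 = 27$. Factoring as $(k-3)(k^2+3k+9) = 0$ yields the stated result, and one easily exhibits an actual singular point for each such $k$ (e.g.\ $[1:\zeta:\zeta^2]$ with a suitable cube root $\zeta$).

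For family $g$, I would use the 2-torsion point $(0,0)$ to transform the curve to Weierstrass form. Passing to symmetric coordinates $u = y+z$, $v = yz$ turns the equation into $u + u^2 + v(u-k) = 0$, so $v$ is rational in $u$. Writing $y,z = (u \pm W)/2$ with $W^2 = u^2 - 4v$, then substituting $u = 1/s$ and rescaling, I expect to obtain the Weierstrass model
\[
Y^2 \;=\; (S+4)\bigl(S^2 + k(k-4)\,S + 4k^2\bigr).
\]
This is singular iff the cubic on the right has a repeated root, which happens in exactly two ways: either $S = -4$ is also a root of the quadratic factor (giving $16(k+1) = 0$, i.e.\ $k = -1$), or the quadratic factor itself has a double root (its discriminant is $k^2\bigl((k-4)^2 - 16\bigr) = k^3(k-8)$, giving $k=0$ or $k=8$). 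Hence the singular values are $k \in \{-1, 0, 8\}$.

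A small clean-up is still needed: one should verify, by inspecting the original projective equation at infinity (where the curve meets the line $w=0$ in the three points $(1:0:0)$, $(0:1:0)$, $(1:-1:0)$), that the birational transformation to Weierstrass form has not contracted any singular points which would change the answer. I expect the main obstacle to lie in finding the right Weierstrass model for family $g$; family $n$ is essentially transparent thanks to the Hesse symmetry, and once the Weierstrass form for $g$ is in hand the singular values can be read off immediately from the factored cubic.
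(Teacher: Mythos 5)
Your argument is correct, and in fact the paper offers no proof of this proposition at all, so you are filling a gap rather than paralleling an existing argument. The family-$n$ computation via the Hesse pencil is exactly right; the only slip is your parenthetical example of a singular point, since $[1:\zeta:\zeta^2]$ forces $k=3$ from the relation $3Z^2=kYW$ --- for the two complex roots of $k^2+3k+9$ one should instead take a vertex of the degenerate triangle such as $[1:1:3/k]$, which does satisfy all three partial-derivative equations when $k^3=27$. For family $g$, your reduction to $V^2=u(k-u)\bigl(-u^2+(k-4)u-4\bigr)$ via $u=y+z$, $v=yz$ is sound, and the repeated-root analysis (coincidence $u=0=k$ giving $k=0$; $u=k$ a root of the quadratic giving $4k+4=0$; vanishing of the discriminant $(k-4)^2-16=k(k-8)$) delivers exactly $\{-1,0,8\}$; your proposed model $Y^2=(S+4)(S^2+k(k-4)S+4k^2)$ is consistent with this. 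A route more aligned with the paper's own machinery would be to use the Deuring forms it derives in the next subsection, $D(k+6,k^2+3k+9)$ and $D(k-2,k)$: for $Y^2+a_1XY+a_3Y=X^3$ the discriminant is $a_3^3(a_1^3-27a_3)$, which evaluates to $(k^3-27)^3$ in case $n$ and to $k^3(k+1)^2(k-8)$ in case $g$, reproducing both lists at once and sidestepping the birational-clean-up issue you rightly flag (one must still note that the maps to Deuring form are isomorphisms away from the stated bad fibres). Either way the statement stands.
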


Next we determine the number $c_\R(k)$ of connected components of the set of real points for $k\in\R$. This is half the number of real $2$-torsion points. Taking remark (ii) into account, we need to find whether there are $1$ or $3$ real solutions of $P(y, y)=0$. We obtain
\begin{prop} $c_\R(k)=1$ in the following cases:
\begin{itemize} 
\item For the family $n$ if $k<3$.
\item For the family $g$ if $0<k<8$.
\end{itemize}
Otherwise $c_\R(k)=2$.
\end{prop}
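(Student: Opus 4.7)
The plan is to follow the reduction already indicated in the paragraph preceding the statement: since an elliptic curve over $\R$ has $E(\R) \cong S^1$ or $E(\R) \cong S^1 \times \Z/2$, its $2$-torsion has order $2$ or $4$ respectively, and $c_\R(k)$ is exactly half this count. By remark (ii) the involution $[-1]$ is $(y,z) \mapsto (z,y)$, so the real $2$-torsion points are precisely the real fixed points of the swap. The point at infinity $(1:-1)$ is always fixed and is the origin; the remaining fixed points lie on the affine diagonal $y=z$ and correspond bijectively to real roots of $P(y,y)=0$. Hence it suffices to determine whether this cubic in $y$ has one or three real roots.

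For family $n$ one gets $P(y,y) = 2y^3 - ky^2 + 1$. A direct computation of the discriminant as a cubic in $y$ yields $\Delta_n = 4(k^3-27)$, so the cubic has three distinct real roots iff $k>3$ and a single real root iff $k<3$. At $k=3$ the root is double and the curve is singular, matching the previous proposition. Consequently $c_\R(k)=2$ for $k>3$ and $c_\R(k)=1$ for $k<3$.

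For family $g$ one gets $P(y,y) = 2y(1+y)^2 - ky^2 = y\bigl(2y^2 + (4-k)y + 2\bigr)$. The factor $y$ supplies the always-present $2$-torsion point $Q=(0,0)$ noted in remark (iii); the quadratic factor has discriminant $(4-k)^2 - 16 = k(k-8)$. This is positive when $k<0$ or $k>8$, vanishes at the singular values $k\in\{0,8\}$, and is negative precisely for $0<k<8$. So the cubic has three real roots when $k<0$ or $k>8$ and only one when $0<k<8$, giving the asserted values of $c_\R(k)$.

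The main conceptual input, namely the identification of real $2$-torsion with swap-fixed points on the diagonal, is handed to us by remark (ii); the only remaining obstacle is the short discriminant calculation in each family, which presents no real difficulty. A minor sanity check worth performing is that the singular values $k=3$ for family $n$ and $k\in\{0,8\}$ for family $g$ correspond exactly to the boundary discriminant vanishing, consistent with the preceding proposition.
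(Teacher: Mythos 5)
Your proposal is correct and follows exactly the reduction the paper itself uses (real $2$-torsion count via fixed points of the swap $(y,z)\mapsto(z,y)$, hence counting real roots of $P(y,y)=0$); the paper omits the explicit discriminant computations, which you supply correctly, including the factorization $P(y,y)=y\bigl(2y^2+(4-k)y+2\bigr)$ in the $g$ case and the consistency check at the singular values.
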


And finally, we describe $\K_\ast$.
\begin{prop}
\begin{itemize} 
\item $\K_n$ is a curved triangle with vertices at the solutions of $x^3=27$. Its intersection with the real line is $[-1,3]$;
\item $\K_g=[-1, 8]$;
\end{itemize}
\end{prop}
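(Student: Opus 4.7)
The plan is to treat the two families separately. For $\K_g$, the strategy is to observe that on the unit torus the function is manifestly real: a direct expansion gives
\[
\frac{(1+y)(1+z)(y+z)}{yz} \eq 2 + (y+y^{-1}) + (z+z^{-1}) + (yz^{-1} + y^{-1}z),
\]
and on $|y|=|z|=1$ this equals $|1+y+z|^2 - 1$. The modulus $|1+y+z|$ takes every value in $[0,3]$ on the torus: the upper bound $3$ follows from the triangle inequality (with equality at $y=z=1$), the value $0$ is realized at $(y,z) = (\omega,\omega^{2})$ with $\omega = e^{2\pi\I/3}$, and intermediate values come by connectedness. Thus $\K_g = [-1,8]$.

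For $\K_n$, I parameterize $y = e^{\I\alpha}$, $z = e^{\I\beta}$ and set $u = 2\alpha-\beta$, $v = 2\beta-\alpha$, $w = -u-v$. Then $(y^3+z^3+1)/(yz) = e^{\I u} + e^{\I v} + e^{\I w}$, and since the linear map $(\alpha,\beta)\mapsto(u,v)$ has nonzero determinant, $(u,v)$ ranges over all of $\R^2$. Hence $\K_n$ equals the set of sums $\zeta_1+\zeta_2+\zeta_3$ with $|\zeta_j|=1$ and $\zeta_1\zeta_2\zeta_3=1$. To pin down $\K_n \cap \R$, note that for a real $s := \zeta_1+\zeta_2+\zeta_3$, conjugation together with $\overline{\zeta_j} = \zeta_j^{-1}$ shows that the second elementary symmetric polynomial $e_2$ equals $\overline{s} = s$, so the cubic $t^3 - st^2 + st - 1$ has $1$ as a root and factors as $(t-1)(t^2 + (1-s)t + 1)$; the remaining two roots lie on the unit circle iff their sum $s-1$ lies in $[-2,2]$, i.e.\ iff $s \in [-1,3]$, with every such $s$ realized by $\zeta_1 = 1$, $\zeta_2 = \overline{\zeta_3} = e^{\I\theta}$.

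It remains to describe the shape of $\K_n$ in $\C$. I would compute the Jacobian of $\phi(u,v) := e^{\I u}+e^{\I v}+e^{-\I(u+v)}$ and simplify to obtain an expression which vanishes exactly when two of $u,v,w$ coincide. By the $S_3$-symmetry permuting $u,v,w$, all three components of the critical locus map to the same curve in $\C$; taking $u=v$ gives the parameterization $t \mapsto 2e^{\I t} + e^{-2\I t}$, a deltoid whose three cusps (at $t \in \{0, 2\pi/3, 4\pi/3\}$) sit at $3,3\omega,3\omega^{2}$, the solutions of $x^3 = 27$. Since $\partial \K_n$ is contained in the critical value set, $\partial \K_n$ lies on this deltoid. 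The last and most delicate step is to check that $\K_n$ actually fills the closed bounded region $\overline U$ cut out by the deltoid; here I would use that the complement of the deltoid in $\C$ has exactly two connected components, $U$ (bounded) and $V$ (unbounded), and that both $U \cap \K_n$ and $V \cap \K_n$ are simultaneously open and closed in $U$ and $V$ respectively (because $\partial \K_n$ lies on the deltoid, hence is disjoint from $U \cup V$). Since the origin lies in $U \cap \K_n$ (realized by $\phi(2\pi/3, -2\pi/3)$, corresponding to $\{\zeta_j\} = \{1,\omega,\omega^2\}$) and $\K_n$ is bounded, connectedness of $U$ and $V$ forces $U \subset \K_n$ and $V \cap \K_n = \emptyset$; combined with the deltoid itself being in $\K_n$, this gives $\K_n = \overline U$.
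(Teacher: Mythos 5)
The paper states this proposition without proof, so there is nothing to compare your argument against; judged on its own, your proof is correct and complete. The $\K_g$ computation (rewriting the function on the torus as $|1+y+z|^2-1$ and using the intermediate value theorem) is clean, and the reduction of $\K_n$ to sums $\zeta_1+\zeta_2+\zeta_3$ with $|\zeta_j|=1$, $\zeta_1\zeta_2\zeta_3=1$ via the unimodular-up-to-scaling change of angle variables is exactly the right move; the symmetric-function argument for $\K_n\cap\R=[-1,3]$ is airtight (the key point being that $e_2=\overline{e_1}$ forces $t=1$ to be a root of the characteristic cubic). The Jacobian computation does work out as you predict: with $u+v+w=0$ the Jacobian of $\phi$ is $\sin(v-u)+\sin(u-w)+\sin(w-v)=-4\sin\frac{v-u}{2}\sin\frac{u-w}{2}\sin\frac{w-v}{2}$, which vanishes exactly when two of $u,v,w$ coincide mod $2\pi$, and the critical values form the deltoid $2e^{\I t}+e^{-2\I t}$ with cusps at the cube roots of $27$. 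The only steps you leave as standard facts are (a) that this deltoid is a simple closed curve, so that its complement has exactly two components, and (b) that the origin lies in the bounded component; both are easily supplied (e.g.\ $|2e^{\I t}+e^{-2\I t}|^2=5+4\cos 3t>0$ and the winding number of the deltoid about $0$ equals that of the dominant term $2e^{\I t}$, namely $1$). With those two remarks added, this would serve as a complete proof of the proposition, which the paper itself omits.
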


\subsection{Determining $[\gamma]$}\label{determine_gamma}
In general our curve $E(k)$ is determined by a polynomial equation $P_k(y, z) = 0$. To find $\gamma$ from Section \ref{the_regulator} we need to represent the real torus $\{(y,z)|\; |y|=|z|=1\}$ as the boundary of some $3$-chain $\xi$ which intersects $C$ transversally, then $\gamma=\xi\cap C$. For $\xi$ we choose the set $\{(y,z)|\; |y|=|z|\geq 1\}$. The orientation of $\xi$ is encoded by a $1$-form normal to $\xi$, i.e. a form on $\PP^2$ which does not vanish on $\xi$ but vanishes on vectors along $\xi$. If $e_1$, $e_2$, $e_3$ is an oriented basis of tangent vectors to $\xi$ at some point and $e_4$ is such that the value of the form is positive on $e_4$, the tuple $e_1$, $e_2$, $e_3$, $e_4$ is required to be an oriented basis for the ambient space. One may verify that $\rho:=\Re(\frac{dy}y-\frac{dz}z)$ can be chosen as such a form. 

We will always have $E(k)$ defined over $\R$. We choose an orientation on $E(k)(\R)$ by requiring the form $\omega$ to be positive. The form $\omega$ is defined simultaneously for the whole family as the unique form which satisfies
\[
\omega\wedge dk = \frac{dy}{y}\wedge \frac{dz}{z}.
\]
One can find $\omega$ by differentiating the equation $P_k(y,z)=0$ (note that for all our families $\frac{\partial}{\partial k} P_k = -yz$):
\[
\left(\frac{\partial}{\partial y} P_k\right) dy + \left(\frac{\partial}{\partial z} P_k\right) dz - yz dk=0.
\]
Thus
\[
\frac{dy}{y}\wedge \frac{dz}{z} = \frac{dy}{\left(\frac{\partial}{\partial z} P_k\right)} \wedge dk,
\]
so
\[
\omega = \frac{dy}{\left(\frac{\partial}{\partial z} P_k\right)}.
\]
Next we express $\rho$ in terms of $\omega$:
\[
\rho = \Re\left(\frac{\Big(\frac{\partial}{\partial z} P_k\Big)}{y}\omega + \frac{\Big(\frac{\partial}{\partial y} P_k\Big)}{z}\omega\right) = \Re\left(\frac{\left(y \frac{\partial}{\partial y} + z\frac{\partial}{\partial z}\right) P_k}{yz} \cdot \omega \right) = \Re (f_k(y,z)\omega)
\]
where 
\[
f_k(y,z):=\frac{\left(y \frac{\partial}{\partial y} + z\frac{\partial}{\partial z}\right) P_k}{yz}.
\]
Suppose the class of $\gamma$ is the class of $[0,t_k-c_k\tau]$. Then, because $\Re\omega$ is a normal form to paths of type $[0,-x\I]$, $x>0$, we obtain (provided that $f_k$ has constant sign on $|\gamma|$)
\[
\sign c_k = \sign (f_k|_{|\gamma|}).
\]

To determine $[\gamma]$ we compute the intersection $\xi\cap C$ first set-theoretically, and then restrict $f_k$ to the intersection. Thus we recover the orientation of $\gamma$, hence $c_k$ and 
\[
r_{E(k)}(\{y,z\})([\gamma]) = c_k R_{E(k)}(\beta(\{y,z\})).
\]

\subsubsection{The case $n$}
Assume $k\notin \K_n$, $k\in\R$, which means $k<-1$ or $k>3$. We make substitutions $y=ta$, $z=tb$, and look for the solutions of
\begin{equation}\label{eqgEn}
t^3 a^3 + t^3 b^3 + 1 = kt^2 ab,\quad |a|=|b| = 1, \quad t\geq 1.
\end{equation}
Applying the complex conjugation and multiplying by $a^3b^3$ we obtain 
\[
t^3 b^3 + t^3 a^3 + a^3 b^3 = k t^2 a^2 b^2.
\]
Subtracting these two equations we obtain
\[
1- (ab)^3 = k t^2 a b (1-ab).
\]
We see that either $ab=1$, or $1 + ab + (ab)^2 = kt^2 a b$, which can be rewritten as
\[
ab + (ab)^{-1} = k t^2 - 1.
\]
Now $ab + (ab)^{-1} \in [-2, 2]$. On the other hand, when $k>3$, then $k t^2 - 1 >2$. When $k<-1$, then $k t^2 - 1<-2$, thus the last equality cannot hold and we conclude $ab=1$. This is equivalent to $z=\overline{y}$, which can be rewritten as $p=-\overline{p}$. The points which satisfy the latter condition will be called ``imaginary points''.

Now we prove that $|\gamma|$ is not empty. Taking into account that $ab=1$ we need to show that there exist solutions of (see (\ref{eqgEn}))
\[
t^3(a^3+a^{-3}) + 1 = k t^2
\]
with $t\geq 1$, $|a|=1$. Thus we need to show that there exists $t\geq 1$ such that $\frac{k t^2 -1}{t^3}\in[-2,2]$. This is obviously satisfied when $t$ is large enough.

Note also that for $k>3$ not all imaginary points have the property $|y|\geq 1$. Indeed, for $t=\frac{1}{\sqrt{k}}<1$ we also have $\frac{k t^2 -1}{t^3}=0\in[-2,2]$.

Assume $k<-1$. Then the set of real points has one connected component, so $\tau\in \frac12 + \I\R$ and the set of imaginary points also has one connected component. Since $|\gamma|$ is not empty it must coincide with the set of imaginary points, so it is the segment $[0, 2\tau - 1]$ in the uniformisation. 

Assume $k>3$. Then the set of real points has two connected components, so $\tau\in \I\R$ and the set of imaginary points also has two connected components. Since $|\gamma|$ is not empty and not the whole set of imaginary points, it must be one of the connected components. It is also clear that it is the one that contains the zero point, so $|\gamma|=[0,\tau]$.

To determine the orientation of $\gamma$ we have to look at the function $f_k(y,z)=\frac{kyz-3}{yz}$. We have $yz\in\R$, $yz>1$, so when $k>3$ it is positive and when $k<-1$ it is negative. The case $k=-1$ can be obtained by passing to the limit. Therefore
\[
c_k=\begin{cases}
 1 & \text{if $k>3$,}\\
 -2 & \text{if $k\leq -1$.}
\end{cases}
\]

\subsubsection{The case $g$}
Similarly to the previous case, if $k\in\R$ (here we do not require $k\notin\K_g$):
\begin{equation}\label{eqgEg}
ab(a+b) t^2 + ((a+b)^2-kab)t + a + b = 0.
\end{equation}
Applying the complex conjugation and multiplying by $a^3b^3$ we obtain 
\[
ab(a+b) t^2 + ab ((a+b)^2-kab)t + (ab)^2 (a + b) = 0.
\]
Again, subtracting the last two equations we obtain
\[
(1-ab)(((a+b)^2-kab)t + (1+ab)(a+b)) = 0.
\]
Again, $ab=1$ or $((a+b)^2-kab)t + (1+ab)(a+b)=0$. In the latter case we obtain (using (\ref{eqgEg}))
\[
ab(a+b)(t^2-1)=0.
\]
Thus $a+b=0$ or $t=1$. $a+b=0$ gives $kabt=0$, hence $k=0$. If $t=1$ then $k = (a+b)(1+a)(1+b)$, which is possible only when $k\in[-1,8]$. Note that $t=1$ means $\overline{y}=y^{-1}$ and $\overline{z}=z^{-1}$. Thus it is equivalent to $\overline{p}=p+Q$.

Thus for $k\notin[-1,8]$ we again have $p=-\overline{p}$. For $k\in[-1,8]$ we have two possibilities: $p=-\overline{p}$ or $\overline{p}=p+Q$. Next we note that $|\gamma|$ and $|\gamma| + Q$ together cover the set of imaginary points because the map $p\rightarrow p+Q$ is the map $(y,z)\rightarrow(y^{-1}, z^{-1})$. Moreover, each point which does not satisfy $p=p+Q$ is covered exactly once. When $k\in(0,8)$ we have $\tau\in\frac12 + \I\R$ and the set of imaginary points is the segment $[0, 2\tau-1]$. We see that $\gamma$ must be homologous to the path $[0, a\tau+b]$ with $|a|=1$. Otherwise $\tau\in\I\R$ and the set of imaginary points has two connected components each homologous to $[0,\tau]$. Thus, again $\gamma$ must be homologous to the path $[0, a\tau+b]$ with $|a|=1$. 

To determine the orientation we put $ab=1$ in (\ref{eqgEg}) to obtain
\[
t (a+b)^2 + (t^2+1) (a+b) = k t.
\]
We need to determine the sign of the function 
\[
f_k(y,z)=\frac{3 yz(y+z) + 2 (y+z)^2 + (y+z)}{yz} - 2k
\]
on the locus $\{|y|=|z|>1\}$. Using the equation of the curve and the variables $t$, $a$, $b$ (we have $ab=1$) we may rewrite it as $(a+b)(t^2-1)$, whose sign is the same as the sign of $a+b$. Taking into account that $a+b\in[-2,2]$ we see that the sign of $a+b$ coincides with the sign of $t (a+b)^2 + (t^2+1) (a+b)$, whose sign coincides with the sign of $k$. Thus $c_k=\sign k$.

\subsection{Deuring form}
For $a_1, a_3\in\C$ we denote by $D(a_1, a_3)$ the elliptic curve given by the Deuring equation
\[
 Y^2 + a_1 X Y + a_3 Y = X^3.
\]
We denote by $P$ the point $(0,0)$ of order $3$ and by $\omega$ the differential form $\frac{dX}{2 Y + a_1 X + a_3}$. When $a_1, a_3$ are real, the real period of $\omega$ is denoted $\Omega_\R(\omega)$.

Now we reduce the curves corresponding to the Mahler measures $n$ and $g$ to the Deuring forms and express the Mahler measures as elliptic dilogarithms. We require that the form $\omega$ from Section \ref{determine_gamma} in the Deuring form becomes $\frac{dX}{2 Y + a_1 X + a_3}$, so the signs do not change.

\subsubsection{Case $n$}\label{subs:case_n}
Let $C$ be the projective curve defined by the projective closure of the equation $y^3+z^3+1=k y z$. It is a smooth curve when $k^3\neq 27$. Making the following rational change of variables:
\[
X=-(k^2+3k+9)\frac{1+y+z}{k+3(y + z)},\quad Y=(k^2+3k+9)\frac{ky+3z+3}{k+3(y + z)},
\]
one obtains the curve $D(k+6, k^2+3k+9)$:
\begin{equation}\tag{$E_n$}\label{eq:n}
 Y^2+(k+6)XY+(k^2+3k+9)Y=X^3.
\end{equation}

The curve $C$ contains $9$ points among which three satisfy $y=0$, three satisfy $z=0$, and three points are at infinity. Let $\epsilon=\frac{-1+\sqrt{-3}}2$. The point $(0,-1)$ on $C$ is mapped to the point $P=(0,0)$, and the point at infinity $(1:-\epsilon:0)$ is mapped to $Q=(-\frac{k^2+3k+9}3,\frac{k^2+3k+9}9(\epsilon+2)(k-3\epsilon))$. The points $P$ and $Q$ are generators of the group of points of order $3$ thus giving a level structure of type $\Gamma(3)$ on the elliptic curve.

One can verify that the divisor of $y$ is $[P]+[P+Q]+[P-Q]-[0]-[Q]-[-Q]$ and the divisor of $z$ is $[-P]+[-P+Q]+[-P-Q]-[0]-[Q]-[-Q]$. Therefore
\begin{multline*}
\beta(\{y,z\})=(y)\ast(z)^- = 3([2P]+[2P+Q]+[2P-Q]) - 6([P]+[P+Q]+[P-Q]) \\+ 3([0]+[Q]+[-Q]) = -9 ([P]+[P+Q]+[P-Q]).
\end{multline*}

Finally we express the Mahler measure in terms of $R_{E_n(k)}$:
\begin{equation}\label{eq:deninger_n}
 n(k) = -\frac{9 c_k}{2\pi} R_{E_n(k)}([P] + [P+Q] + [P-Q]),\;\;\text{where}\;
c_k=\begin{cases}
 1 & \text{if $k>3$,}\\
 -2 & \text{if $k\leq -1$.}
\end{cases}
\end{equation}

\subsubsection{Case $g$}\label{subs:case_g}
Similarly to the first case after making the following rational change of variables:
\[
X=k\frac{y+z+1}{y + z-k},\quad Y=k\frac{-yk+z+1}{y+z-k},
\]
one obtains the elliptic curve $D(k-2,k)$ if $k\notin\{-1,0,8\}$:
\begin{equation}\tag{$E_g$}\label{eq:g}
 Y^2+(k-2)XY+kY=X^3.
\end{equation}

Let $Q$ be the point $(-1,-1)$, which is of order $2$. The points $P$ and $Q$ generate a cyclic group of order $6$. Thus we obtain a level structure of type $\Gamma_1(6)$. 

One can verify that
\[
(y) = [Q]+[P]-[Q+P]-[0],\quad (z) = [Q]+[-P]-[Q-P]-[0],
\]
therefore
\[
\beta(\{y,z\})=(y)\ast(z)^- = 6 ([P+Q]-[P]).
\]

Finally we obtain
\begin{equation}\label{eq:deninger_g}
 g(k) = \frac{3 c_k}{\pi} R_{E_g(k)}([P+Q]-[P]),\;\;\text{where}\;
c_k=\sign k.
\end{equation}

\section{Beilinson's theorem for $\Gamma_0(N)$}\label{sec:beilinson}
Let $N$ be a squarefree integer with prime decomposition $N=p_1,\ldots,p_n$. Let $f=\sum a(n)q^n$ be a newform for $\Gamma_0(N)$ of weight $2$. Let $W$ be the group of Atkin-Lehner involutions. This is a group isomorphic to $(\Z/2\Z)^n$. For $m>0$, $m|N$ denote by $w_m$ the Atkin-Lehner involution corresponding to $m$. Any cusp of $\Gamma_0(N)$ is given by $w(\infty)$ for a unique $w\in W$. The width of $w_m(\infty)$ is $m$. It is known that (see \cite{AL70}) for a prime $p|N$ we have $f|_2w_p=-a(p) f$. 

Let $E(\tau, s)$ be the real-analytic Eisenstein series which for $\Re s>1$ is given by
\[
 E(\tau, s) = \sum_{\gamma\in \Gamma_0(\infty) \backslash \Gamma_0(N)} (\Im \gamma\tau)^s.
\]
Let $\Q[W]_0$ be the augmentation ideal of $\Q[W]$. For any $\alpha\in \Q[W]_0$, $\alpha = \sum_{w\in W} \alpha_w [w]$ consider $E_\alpha = E|_0 \alpha$.
By the Kronecker limit formula $E_\alpha(\tau,s)$ is holomorphic by $s$ at $s=1$ and $E_\alpha(\tau,1)=-\frac{1}{2\pi} \log|F_\alpha(\tau)|$ for $F_\alpha\in \C(X_0(N))^\times \otimes \Q$ such that $(F_\alpha) = \sum_{w\in W} \alpha_w [w(\infty)]$.

Let $\alpha, \beta\in \Q[W]_0$. Then $\{F_\alpha, F_\beta\} \in K_2(X_0(N))\otimes \Q$ and by the definition
\begin{multline}\label{eq:beilinson:1}
\langle r_{X_0(N)}(\{F_\alpha, F_\beta\}), 2\pi\I f(\tau) d\tau\rangle \\= - 8 \pi^2 \int_{X_0(N)} f(\tau) \log|F_\alpha(\tau)| \overline{\frac{\partial}{2\pi\I \partial \tau} \log F_\beta(\tau)} d x d y
\end{multline}
where $x$, $y$ denote $\Re \tau$, $\Im \tau$ respectively. 

Let $E_2=\frac{3}{\pi y} + 1-24 \sum_{k,l>0} k q^{kl}$. Note that $\frac{\partial}{2\pi\I \partial \tau} \log F_\beta(\tau)$ is a holomorphic Eisenstein series of weight $2$. Suppose $\beta'\in\Q[W]$ is such that the latter equals $E_2|_2 \beta'$ (we will compute $\beta'$ later). Then the right hand side of (\ref{eq:beilinson:1}) equals
\[
 \lim_{s\rightarrow 1+} 16 \pi^3 \int_{X_0(N)} \sum_{w\in W} \alpha_w E(w \tau, s) f(\tau) \overline{(E_2|_2\beta')(\tau)} dx dy.
\]

Let $\gamma:W\rightarrow\{\pm 1\}$ be such that $f|_2 w = \gamma(w) f$ for all $w\in W$. We may rewrite the integral as 
\[
\int_{X_0(N)} E(\tau,s) f(\tau) \overline{(E_2|_2 \beta' \sum_{w\in W} \alpha_w \gamma(w) w)(\tau)} dx dy.
\]
Let $g=E_2|_2 \beta' \sum_{w\in W} \alpha_w \gamma(w) w$. The last integral is the Rankin-Selberg integral for the convolution L-function of $f$ and $g$. It equals (see \cite{Iwaniec97}) $(4 \pi)^{-2} (L(f) \ast \overline{L(g)})(s+1)$ where 
\[
\sum_{k>0} a(k) k^{-s} \ast \sum_{k>0} a'(k) k^{-s} := \sum_{k>0} a(k)a'(k) k^{-s}.
\]
Therefore our regulator (\ref{eq:beilinson:1}) equals $-\frac12 (L(f) \ast \overline{L(g)})(2)$.

Let $\beta' \sum_{w\in W} \alpha_w \gamma(w) w = \sum_{m|N} c_m w_m$. Note that $E_2|_2 w_m (\tau) = m E_2(m \tau)$. Thus
\[
 L(g)(s) = - 24 \sum_{m|N} m c_m \sum_{k,l>0} k (klm)^{-s} = -24 \sum_{m|N} c_m m^{1-s} \zeta(s)\zeta(s-1).
\]

Now we compute $L(f) \ast m^{-s} \zeta(s)\zeta(s-1)$. Since both $L(f)$ and $m^{-s} \zeta(s)\zeta(s-1)$ have product expansions it is enough to find the convolution of local factors. For $p\nmid N$ we get
\[
 \frac{1}{1-a(p) T + p T^2} \ast \frac{1}{(1-T)(1-pT)} = \frac{1-p^2 T^2}{(1-a(p) T + p T^2) (1 - p a(p)T + p^3 T^2)}.
\]
For $p|\frac{N}{m}$:
\[
 \frac{1}{1-a(p) T} \ast \frac{1}{(1-T)(1-pT)} = \frac{1}{(1-a(p) T) (1-p a(p) T)}.
\]
For $p|m$:
\[
 \frac{1}{1-a(p) T} \ast \frac{T}{(1-T)(1-pT)} = \frac{a(p) T}{(1-a(p) T) (1-p a(p) T)}.
\]
Summarizing:
\[
 L(f) \ast m^{-s} \zeta(s)\zeta(s-1) = a(m) m^{-s} \frac{L(f, s) L(f, s-1)}{\zeta(2s-2) \prod_{k=1}^n (1-p_k^{2-2s})}. 
\]
Thus the regulator equals
\[
 -\frac{144}{\pi} \frac{\sum_{m|N} c_m a(m) m^{-1}}{\prod_{k=1}^n (1-p_k^{-2})} L(f,1) L(f,2).
\]

We want to obtain a nicer expression for $\sum_{m|N} c_m a(m) m^{-1}$. Let $\varepsilon:\Q[W]\rightarrow \Q$ be the map which sends the identity to $1$ and all other $w\in W$ to $0$. Then
\begin{equation}\label{eq:beilinson:2}
 \sum_{m|N} c_m a(m) m^{-1} = \varepsilon(\beta' (\sum_{w\in W} \alpha_w \gamma(w) w) (\sum_{m|N} a(m) m^{-1}w_m)).
\end{equation}
To find $\beta'$ we note the the constant coefficient of $E_2|_2 w_m$ is $m$ for all $m|N$. Let $d = \sum_{m|N} m w_m$. Then $\varepsilon(w_m d) = m$ for all $m|N$. On the other hand the constant coefficient of $E_2|_2 w \beta'$ must be $\beta_w$. This is satisfied if $\varepsilon(w \beta' d) = \beta_w$ for all $w\in W$, which is equivalent to $\beta' d = \beta$. Since $d=\prod_{k=1}^n (1 + p_k w_{p_k})$, $d$ is invertible, $\beta'= d^{-1}\beta$, and 
\[
 d^{-1} = \prod_{k=1}^n \frac{1 - p_k w_{p_k}}{1-p_k^2}.
\]
For the last term in (\ref{eq:beilinson:2}) we also have a product expansion, namely 
\[
 \sum_{m|N} a(m) m^{-1}w_m = \prod_{k=1}^n (1 - \gamma(w_{p_k})p_k^{-1} w_{p_k}) = a(N) N^{-1} w_N \prod_{k=1}^n (1 - \gamma(w_{p_k})p_k w_{p_k}).
\]

Let $\gamma^*$ be the involution of $\Q[W]$ which sends $w$ to $\gamma(w) w$ for $w\in W$. Put $\alpha'=d^{-1} \alpha$. Then (\ref{eq:beilinson:2}) becomes
\[
 (-1)^n \gamma(w_N) N^{-1} \varepsilon(w_N \beta' \gamma^*(\alpha')) \prod_{k=1}^n (1-p_k^2) = N^{-1} \varepsilon(w_N \alpha' \gamma^*(\beta')) \prod_{k=1}^n (p_k^2-1).
\]

The final formula for the regulator reads
\begin{equation}\label{eq:beilinson:3}
 \langle r_{X_0(N)}(\{F_\alpha, F_\beta\}), 2\pi\I f(\tau) d\tau\rangle = -\frac{144 N}{\pi} \varepsilon(w_N \alpha' \gamma^*(\beta')) L(f,1) L(f,2).
\end{equation}

Note that corresponding formula in a different case was obtained by Brunault \cite{Brunault06}.

\section{Search for parallel lines}\label{sec:search}
Let $(E/\C, \omega)$ be an elliptic curve with a holomorphic differential. Realize $E$ as a plane cubic with equation $y^2=x^3+a x + b$ such that $\omega = \frac{dx}{2y}$. Let $Z$ be a finite subgroup of $E(\C)$. Put $Z_0=E\setminus\{0\}$. For any $p\in Z_0$ let $x_p$, $y_p$ be the coordinates of $p$. Let $T$ be the set of (unordered) triples $p,q,r \in Z_0$ such that $p+q+r=0$. For $(p,q,r)\in T$ let $l_{p,q,r}$ be the unique line on $P^2$ such that $l_{p,q,r}\cdot E=[p]+[q]+[r]$. Let the equation of this line be $y + s_{p,q,r} x + t_{p,q,r}=0$. The slopes $s_{p,q,r}$ are of special interest because of the following observation:
\begin{prop}\label{prop:parallel}
 Suppose $(p,q,r)\in T$ and $(p',q',r')\in T$ be two distinct triples such that $s_{p,q,r}=s_{p',q',r'}$. Then the value of $R_\omega$ is zero on the divisor
\[
 ([p]+[q]+[r]-3[0])\ast([-p']+[-q']+[-r']-3[0]).
\]
\end{prop}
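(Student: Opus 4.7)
The plan is to exploit the observation flagged in the introduction: two parallel lines on $\PP^2$ differ by a constant, which immediately produces a pair of rational functions summing to a constant, and hence (after normalization) a pair $f$, $1-f$ on $E$ whose divisors lie on the prescribed support. Property (iii) of $R_{E,\omega}$ then closes the argument.

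More precisely, let the two lines be $l = y + s x + t$ and $l' = y + s' x + t'$ with $s = s' = s_{p,q,r} = s_{p',q',r'}$. Then $l - l' = t - t'$ is a (nonzero) constant: indeed if $t = t'$ the lines would coincide, forcing $[p]+[q]+[r] = [p']+[q']+[r']$ as divisors on $E$, contradicting distinctness of the unordered triples. Set
\[
f \eq \frac{l}{t-t'}, \qquad 1-f \eq \frac{-l'}{t-t'}.
\]
Since $l$ is not constant on $E$ (it vanishes at three distinct affine points), $f \neq 1$, so property (iii) applies to the pair $\{f, 1-f\}$.

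It remains to identify the divisors. By the defining property of $l_{p,q,r}$, the intersection divisor of $l$ with $E$ is $[p]+[q]+[r]$, and since $y$ has a triple pole at the point at infinity $[0]$ on $E$, we get $(l) = [p]+[q]+[r] - 3[0]$, and therefore $(f) = [p]+[q]+[r] - 3[0]$ (multiplying $l$ by a nonzero scalar does not change the divisor). Analogously $(1-f) = [p']+[q']+[r'] - 3[0]$, and applying the antipode yields
\[
(1-f)^- \eq [-p']+[-q']+[-r'] - 3[0].
\]
Substituting into property (iii) of $R_{E,\omega}$ gives exactly the desired vanishing:
\[
R_{E,\omega}\bigl(([p]+[q]+[r]-3[0]) \ast ([-p']+[-q']+[-r']-3[0])\bigr) \eq 0.
\]

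No step is really hard; the only point requiring a moment of care is ensuring $f$ is nonconstant and that the chosen normalization is compatible with the antipode convention (the $-$ in $(1-f)^-$ and the fact that we are working in $\Z[E(\C)]^-$, where $[p]+[-p]$ terms are killed, so there is no ambiguity between $\pm$ signs on the boundary $3[0]$ contribution). The construction works verbatim for any plane model with $[0]$ a flex, which is the convention implicit in the Weierstrass form $y^2 = x^3 + ax + b$ fixed at the start of Section~\ref{sec:search}.
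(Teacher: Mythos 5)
Your proof is correct and follows essentially the same route as the paper: normalize the line $y+sx+t$ by the constant $t-t'$ so that $f$ and $1-f$ cut out the two triples, then invoke the vanishing of $R_{E,\omega}$ on $(f)\ast(1-f)^-$. You merely make explicit a few details the paper leaves implicit (why $t\neq t'$ and why $f$ is nonconstant).
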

\begin{proof}
 Since the triples are distinct $t_{p,q,r}\neq t_{p',q',r'}$. Put 
\[
f = (t_{p,q,r}-t_{p',q',r'})^{-1} (y + s_{p,q,r} x + t_{p,q,r}).
\]
Then $(f) = [p]+[q]+[r]-3[0]$, $(1-f) = [p']+[q']+[r']-3[0]$ and the statement is essentially $R_\omega(\beta(\{f,1-f\}))=0$.
\end{proof}

\begin{prop}\label{prop:z} There exists a unique map from $Z_0$ to $\C$, denoted $p\rightarrow z_p\in\C$, such that 
\begin{enumerate}
 \item[(i)] $z_p+z_{-p}=0$ for all $p\in Z_0$,
 \item[(ii)] $z_p + z_q + z_r = s_{p,q,r}$ for all $(p,q,r)\in T$,
\end{enumerate}
moreover, we have
\begin{enumerate}
 \item[(iii)] $x_p + x_q + x_r = s_{p,q,r}^2$ for all $(p,q,r)\in T$.
\end{enumerate}
For an isogeny $\rho:E'\rightarrow E$ let the differential on $E'$ be $\rho^* \omega$ and the group be $\rho^{-1} Z$. Then for any $p\in Z_0$
\[
 z_p = \sum_{p'\in \rho^{-1} p} z_{p'}.
\]
\end{prop}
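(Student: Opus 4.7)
The plan is to establish uniqueness first by a short homomorphism argument, deduce (iii) algebraically from Vieta, construct $z_p$ analytically via a corrected Weierstrass $\zeta$-function on a uniformization of $E$, and reduce the isogeny identity to the classical distribution relation for that modified $\zeta$.

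For uniqueness, suppose $z$ and $z'$ both satisfy (i) and (ii), and set $w_p := z_p - z'_p$. Then $w$ is odd and $w_p + w_q + w_r = 0$ for every $(p,q,r) \in T$. Applying this to the triple $(p, q, -p-q)$ whenever $p, q, p+q \in Z_0$ and using $w_{-p-q} = -w_{p+q}$ gives $w_{p+q} = w_p + w_q$; declaring $w_0 = 0$, this says $w$ is a homomorphism $Z \to \C$. Since $Z$ is a finite abelian group and $\C$ has no nonzero torsion, $w \equiv 0$. Property (iii) is immediate from Vieta: substituting $y = -s_{p,q,r} x - t_{p,q,r}$ into $y^2 = x^3 + a x + b$ produces the cubic $x^3 - s_{p,q,r}^2 x^2 + (a - 2 s_{p,q,r} t_{p,q,r}) x + (b - t_{p,q,r}^2) = 0$, whose three roots are $x_p, x_q, x_r$.

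For existence, uniformize $E(\C) \cong \C/\Lambda$ so that $\omega = du$, $x = \wp(u)$, $y = \tfrac{1}{2}\wp'(u)$. The classical addition formula $\zeta(u+v) - \zeta(u) - \zeta(v) = \tfrac{1}{2}(\wp'(u) - \wp'(v))/(\wp(u) - \wp(v))$, together with the observation that in the convention $y = -s x - t$ the chord slope through $(x_p, y_p)$ and $(x_q, y_q)$ is $-s_{p,q,r}$, yields $\zeta(u_p) + \zeta(u_q) + \zeta(u_r) = s_{p,q,r}$ whenever $u_p + u_q + u_r = 0$ in $\C$. The obstruction is that $\zeta$ is only quasi-periodic, $\zeta(u+\lambda) = \zeta(u) + \eta(\lambda)$ with $\eta \colon \Lambda \to \C$ an $\R$-linear map, so $\zeta(u_p)$ depends on the chosen lift. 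I would rectify this by choosing $\alpha, \beta \in \C$ with $\alpha \lambda + \beta \bar\lambda = \eta(\lambda)$ for all $\lambda \in \Lambda$---possible because any basis of $\Lambda$ is $\R$-linearly independent, so the $2 \times 2$ system is invertible---and setting $\tilde\zeta(u) := \zeta(u) - \alpha u - \beta \bar u$. This function is smooth, odd, and $\Lambda$-periodic, so descends to $E(\C) \setminus \{0\}$, and the correction terms cancel in $\tilde\zeta(u_p) + \tilde\zeta(u_q) + \tilde\zeta(u_r)$ precisely when $u_p + u_q + u_r = 0$. Thus $z_p := \tilde\zeta(u_p)$ satisfies (i) and (ii).

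For the isogeny compatibility, take uniformizations with $E' = \C/\Lambda'$, $\Lambda' \subset \Lambda$, and $\rho$ induced by the identity on $\C$; then $\rho^{-1}(p) = \{u_p + \lambda + \Lambda' : \lambda \in \Lambda/\Lambda'\}$. The required equality $z_p = \sum_{p' \in \rho^{-1}(p)} z_{p'}$ becomes the distribution relation $\tilde\zeta_\Lambda(u) = \sum_{\lambda \in \Lambda/\Lambda'} \tilde\zeta_{\Lambda'}(u + \lambda)$. Both sides are smooth, odd, $\Lambda$-periodic functions on $\C$ with simple poles of residue $1$ along $\Lambda$; differentiating in $u$ reduces the claim to the elementary distribution relation for $\wp$ up to a constant, and oddness then forces that constant to vanish. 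The main obstacle throughout is this bookkeeping around the modified $\zeta$---verifying $\alpha, \beta$ exist, that $\tilde\zeta$ truly descends, and that the distribution relation survives the non-holomorphic correction---but each piece follows formally from the Legendre-type relation between the periods and quasi-periods of $\Lambda$.
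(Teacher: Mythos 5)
Your proof is correct, but it runs along a genuinely different track from the paper's. The paper works purely algebraically with the logarithmic differentials $\varphi_p=n_p^{-1}d\log f$ for $(f)=n_p[0]-n_p[p]$: expanding in the formal integral $u$ of $\omega$ gives coefficients $e_k(p)$, the identity $-d\log(y+s_{p,q,r}x+t_{p,q,r})=\varphi_p+\varphi_q+\varphi_r$ yields existence with $z_p=e_1(p)$ and (iii) via the residue of $x\varphi_p$, uniqueness comes from telescoping $z_{(k+1)p}=z_p+z_{kp}-s_{p,kp,-(k+1)p}$ to get $n_pz_p=\sum_k s_{p,kp,-(k+1)p}$, and the isogeny statement is immediate from $\rho^*\varphi_p=\sum_{p'}\varphi_{p'}$. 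You instead construct $z_p$ transcendentally as the value of the quasi-period-corrected Weierstrass zeta $\tilde\zeta(u)=\zeta(u)-\alpha u-\beta\bar u$, which is a clean and standard device (your $z_p$ is essentially an Eisenstein--Kronecker series of weight one, which meshes nicely with the paper's later remark that slope differences are weight-one Eisenstein series); your Vieta derivation of (iii) and your homomorphism argument for uniqueness ($w=z-z'$ is an odd map with $w_{p+q}=w_p+w_q$, hence a homomorphism from a finite group into $\C$, hence zero) are both simpler than the paper's corresponding steps. The price you pay is at the isogeny compatibility, where the paper gets the result for free from functoriality of $d\log$, while you must verify the distribution relation for $\tilde\zeta$ separately; your argument there (the difference is $\Lambda$-periodic with $\partial/\partial u$ and $\partial/\partial\bar u$ constant, hence constant, and odd, hence zero) does go through, though the closing appeal to a ``Legendre-type relation'' is unnecessary --- only the $\R$-linear independence of the lattice basis is used. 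Two small points you leave implicit and should state: the tangent cases $p=q$ of the addition formula follow by taking the limit $\zeta(2u)-2\zeta(u)=\tfrac12\wp''(u)/\wp'(u)$, and the well-definedness of $\tilde\zeta(u_p)+\tilde\zeta(u_q)+\tilde\zeta(u_r)$ rests on first using $\Lambda$-periodicity to pick lifts with $u_p+u_q+u_r=0$ exactly in $\C$.
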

\begin{proof}
 {\em Existence}. As a local coordinate at $0$ choose the formal integral of $\omega$ and denote it by $u$. For each $p\in Z_0$ let $n_p$ be the order of $p$ and $f$ be a function such that $(f)=n_p[0]-n_p[p]$, which is unique up to a constant multiple. Put $\varphi_p=n_p^{-1} d log f$. Let $e_k(p)$ be the coefficients of $\varphi_p$:
\[
 \varphi_p = \sum_{k=0}^\infty e_k(p) u^{k-1} du.
\]
Since $\varphi_{-p}$ is the pullback of $\varphi_p$ under the automorphism  $\alpha\rightarrow -\alpha$ of $E$, we have $e_k(-p)=(-1)^k e_k(p)$. Note that
\[
 - d \log (y + s_{p,q,r} x + t_{p,q,r}) = \varphi_p + \varphi_q + \varphi_r.
\]
Therefore the Laurent coefficients of the left hand side are sums of the values of $e_k$. On the other hand we may use the Laurent expansions of $x$ and $y$ to obtain
\[
 - d \log (y + s_{p,q,r} x + t_{p,q,r}) = (3 u^{-1} + s_{p,q,r} + s_{p,q,r}^2 u + \cdots) du.
\]
Thus $z_p=e_1(p)$ satisfies the desired properties. 

To prove $(iii)$ it suffices to see that $e_2(p)=x_p$. This follows from vanishing of the sum of residues of the differential $x \varphi_p$ \footnote{The same method can be applied to compute $e_k$ for $k>2$}.

{\em Uniqueness}. We show how to determine $z_p$ for a given $p\in Z_0$ using the properties $(i)$ and $(ii)$. For $k=1,\cdots, n_p-2$ we have $z_{(k+1)p} = z_p + z_{kp} - s_{p,kp,-(k+1)p}$. Summing up these identities we obtain
\[
 n_p z_p = \sum_{k=1}^{n_p-2} s_{p,kp,-(k+1)p}.
\]

To prove the last assertion note that the isogeny preserves the local parameter and 
\[
 \rho^* \varphi_p = \sum_{p'\in \rho^{-1} p} \varphi_{p'}.
\]
\end{proof}

\subsection{Multiplication by $2$}
Let $p\in Z_0$ and suppose all points $q$ such that $2q=p$ belong to $Z$. Denote these points $q_1$,\ldots,$q_4$. Consider the isogeny $[2]:E\rightarrow E$. The pullback of $\omega$ is $2\omega$. The following relation for $R_{E,\omega}$ follows from (\ref{eq:dilog:isog}):
\begin{equation}\label{eq:dilog:mult2}
 R_{E,\omega}(p) = 2 \sum_{i=1}^4 R_{E,\omega}(q_i).
\end{equation}
By Proposition \ref{prop:z} we have (note that passing from $2 \omega$ to $\omega$ divides $z_{q_i}$ by $2$)
\[
 z_p = \frac{1}{2} \sum_{i=1}^4 z_{q_i}.
\]

Suppose $p$ is a point of order $3$. Then $2p$ is among $q_i$. Let it be $q_4$. Hence
\[
 3 z_p = z_{q_1}+z_{q_2}+z_{q_3}.
\]
Since $3p=q_1+q_2+q_3=0$, we may consider lines $l_{p,p,p}$ and $l_{q_1,q_2,q_3}$. By Proposition \ref{prop:z} their slopes are equal. The corresponding divisor from Proposition \ref{prop:parallel} is 
\[
 3([p]-[0])\ast ([-q_1]+[-q_3]+[-q_4]-3[0]) \sim_{-} 3(2[q_2]+2[q_3]+2[q_4] - 3 [p]),
\]
and the statement of Proposition \ref{prop:parallel} is equivalent to (\ref{eq:dilog:mult2}).

\begin{prop}
 Let $p$ be a point of order $3$. Let $r_1$, $r_2$, $r_3$ be the points of order $2$. Then the triple tangent at $p$ is parallel to the line passing through the points $2p + r_i$.
\end{prop}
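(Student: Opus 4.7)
The plan is to deduce this directly from Proposition \ref{prop:z} and the multiplication-by-$2$ discussion that immediately precedes the statement. First, since $p$ has order $3$, we have $p+p+p=0$, so $(p,p,p)\in T$ and the triple tangent at $p$ has slope $s_{p,p,p}=3z_p$ by part (ii) of Proposition \ref{prop:z}. Similarly, $(2p+r_1)+(2p+r_2)+(2p+r_3)=6p+(r_1+r_2+r_3)=0$, so this is also a triple in $T$ and its line has slope $z_{2p+r_1}+z_{2p+r_2}+z_{2p+r_3}$. The goal is therefore to verify
\[
3 z_p = z_{2p+r_1}+z_{2p+r_2}+z_{2p+r_3}.
\]

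Next I would identify the four preimages of $p$ under $[2]\colon E\to E$. Since $3p=0$, we have $2(2p)=4p=p$, so $2p$ itself is one of the preimages; the other three are obtained by adding the $2$-torsion points, giving $2p+r_1$, $2p+r_2$, $2p+r_3$. This is exactly the setup of the boxed discussion right before the proposition: we may take $q_4=2p$ and $q_i=2p+r_i$ for $i=1,2,3$. The distribution identity for $z_{(-)}$ under the isogeny $[2]$, derived in that discussion, reads
\[
3z_p = z_{q_1}+z_{q_2}+z_{q_3},
\]
and substituting our identification yields the equality of slopes above.

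Finally, I would note that the two triples $(p,p,p)$ and $(2p+r_1,2p+r_2,2p+r_3)$ are genuinely distinct: if $p=2p+r_i$ for some $i$, then $r_i=-p$, but $-p$ has order $3$ while $r_i$ has order $2$, a contradiction. Hence the two lines are parallel (not equal), completing the proof.

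The main obstacle is essentially non-existent, since the computation $3z_p=z_{q_1}+z_{q_2}+z_{q_3}$ has already been carried out in the paragraph preceding the proposition; the only substantive content to add is the observation that the anonymous preimage $q_4$ of $p$ is forced by $3p=0$ to equal $2p$, which then pins down $q_1,q_2,q_3$ as the translates $2p+r_i$.
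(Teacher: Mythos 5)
Your argument is correct and is essentially identical to the paper's own reasoning: the proposition is just the geometric restatement of the identity $3z_p=z_{q_1}+z_{q_2}+z_{q_3}$ derived in the multiplication-by-$2$ discussion, with the preimages of $p$ under $[2]$ identified as $2p$ and $2p+r_i$. Your added checks (that $q_1+q_2+q_3=0$ and that the two triples are distinct) are exactly the minor points the paper leaves implicit.
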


\subsection{Lines for $\Gamma(3)$-structure}\label{subs:lines:3}
Let $Z$ be the group of points of order $3$. The set $T$ consists of triples $(p,p,p)$. For each $p\in Z_0$ we have $s^2=3x$. Thus the only pair of parallel lines can be $l_{p,p,p}$ and $l_{-p,-p,-p}$, and this happens only if $x_p=0$. The slope must be $0$, so $3x_p^2 + a=0$, which implies $a=0$. Thus our curve is isomorphic to the CM curve $y^2=x^3+1$. For $p=(0,1)$ the lines $l_{p,p,p}$ and $l_{-p,-p,-p}$ are parallel and Proposition \ref{prop:parallel} says $R_{E,\omega}(p)=0$, which also could be proved by (\ref{eq:dilog:isog}) for the automorphism of $E$ of order $3$, which preserves $p$.

The corresponding rational values of $k$ in the family (\ref{eq:n}), for which parallel lines exist, are $0$ and $-6$. The corresponding rational value for (\ref{eq:g}) is $2$  

\subsection{Lines for $\Gamma_1(6)$-structure}\label{subs:gamma16}
Let $Z$ be a cyclic subgroup of order $6$ with generator $p$. There are $6$ elements in $T$:
\[
 (p,p,4p),\; (p,2p,3p),\; (2p,2p,2p),\; (2p,5p,5p),\; (3p,4p,5p),\; (4p,4p,4p).
\]
There are $6$ pairs of lines which may be parallel. We list only four of them, the other two can be obtained from the listed ones by sending $p$ to $-p$.
\[
 (l_{p,p,4p}, l_{2p,2p,2p}),\; (l_{p,p,4p}, l_{2p,5p,5p}),\; (l_{p,2p,3p}, l_{4p,4p,4p}),\; (l_{2p,2p,2p}, l_{4p,4p,4p})).
\]

Let us compute the slopes for the family (\ref{eq:g}), $p=-P-Q$. The slopes where defined with respect to equation of the form $y^2=x^3 + a x +b$. So whenever we have computed the (negative) slope of a line in coordinates $X$, $Y$ we add $1-\frac{k}2$ to obtain the slopes that we need:
\[
 s_{p,p,4p} = -1-\frac{k}2,\; s_{p,2p,3p} = -\frac{k}2,\; s_{2p,2p,2p} = 1-\frac{k}2,
\]
\[
 s_{2p,2p,5p} = 1+\frac{k}2,\; s_{3p,4p,5p} = \frac{k}2,\; s_{4p,4p,4p} = -1+\frac{k}2.
\]
The values of $k$ for which two lines become parallel are (recall that $k\notin\{-1,0,8\}$):
\begin{itemize}
 \item $k=-2$. This is the curve $20A2$ in Cremona's table. In this case $s_{p,p,4p}=s_{2p,5p,5p}$. We obtain relation $16 R_{E,\omega}(P+Q) = -11 R_{E,\omega}(P)$.
 \item $k=1$. This is the curve $14A4$. In this case $s_{p,2p,3p}=s_{4p,4p,4p}$ and $s_{3p,4p,5p}=s_{2p,2p,2p}$. We obtain relation $2 R_{E,\omega}(P+Q) = 5 R_{E,\omega}(P)$.
 \item $k=2$. This case already appeared in Section \ref{subs:lines:3}.
\end{itemize}

\subsection{Lines for $\Gamma_1(4)\cap\Gamma(2)$-structure}\label{subs:gamma14}
Let $Z$ be a group of order $8$ generated by $p$ and $r$ of orders $4$ and $2$ respectively. $T$ contains $9$ elements:
\[
(p,p,2p),\; (p,r,3p+r),\; (p,p+r,2p+r),\; (p+r,p+r,2p),\; (2p,r,2p+r),
\]
\[
(3p,3p,2p),\; (3p,r,p+r),\; (3p,3p+r,2p+r),\; (3p+r,3p+r,2p).
\]
Let $z_p=\alpha$, $z_{p+r}=\beta$. The slopes of the corresponding lines are
\[
s_{p,p,2p}=2\alpha,\; s_{p,r,3p+r}=\alpha-\beta,\; s_{p,p+r,2p+r}=\alpha+\beta,\; s_{p+r,p+r,2p}=2\beta,\; s_{2p,r,2p+r}=0,
\]
\[
s_{3p,3p,2p}=-2\alpha,\; s_{3p,r,p+r}=-\alpha+\beta,\; s_{3p,3p+r,2p+r}=-\alpha-\beta,\; s_{3p+r,3p+r,2p}=-2\beta.
\]
Studying the possibilities of some lines becoming parallel we obtain $4$ cases, namely $\alpha=\pm 3\beta$ and $\beta=\pm 3\alpha$. Without loss of generality we assume $\beta=3\alpha$ (the other cases can be obtained from this by changing the choice of generators). The relation between elliptic dilogarithms we get in this case is:
\[
3 R_{E,\omega}(p+r)=5R_{E,\omega}(p).
\]
Next we find the curve $E$ with $\beta=3\alpha$. By Proposition \ref{prop:z} (iii) we can find $x_p=-\frac43 \alpha^2$, $x_{p+r}=\frac{44}{3}\alpha^2$, $x_{2p}=\frac{20}3\alpha^2$, $x_{r}=-\frac{28}3\alpha^2$, $x_{2p+r}=\frac83\alpha^2$. In particular, 
\[
(x-x_{2p})(x-x_{r})(x-x_{2p+r}) = x^3 -\frac{208}3\alpha^4 x + \frac{4480}{27}\alpha^6 = x^3 + ax + b.
\]
Different choices of $\alpha\in\Q$ correspond to the same elliptic curve $24A1$ with minimal model $y^2=x^3-x^2-4x+4$. 

\section{Curves of conductor $14$}
There are $6$ curves of conductor $14$, denoted $14A1$--$14A6$ in Cremona's tables. They form the following system of isogenies:
\[
 \begin{CD}
14A4 @>>> 14A1 @>>> 14A3\\
 @VVV @VVV @VVV\\
14A6 @>>> 14A2 @>>>  14A5.
 \end{CD}
\]
The horizontal arrows have degree $3$, the vertical ones degree $2$, and the directions of the isogenies where chosen in such a way that pulling back the N\'eron differential on one curve gives the N\'eron differential on another curve.

Now we identify the curves which correspond to the identities (\ref{eq:conjectures1})--(\ref{eq:conjectures5}) in the following table:

\begin{tabular}{cccc}
Mahler measure & Deuring form & Name\\
$n(-1)$ & $D(5, 7)$ & $14A1$\\
$n(5)$ & $D(11, 49)$ & $14A2$\\
$g(1)$ & $D(-1, 1)$ & $14A4$\\
$g(7)$ & $D(5, 7)$ & $14A1$\\
$g(-8)$ & $D(-10, -8)$ & $14A6$
\end{tabular} 

\subsection{Isogenies of order $3$}
We begin with the values $n(-1)$ and $n(5)$. For any $k$ let us find an isogeny from the curve $E=D(k+6, k^2+3k+9)$ to another curve whose kernel is generated by $Q$ (see Section \ref{subs:case_n}). We send a point on $E$ to the point $(X', Y')=(-yz, y^3)$. Then
\[
 Y'-\frac{X'^3}{Y'} + 1 = -k X',
\]
so we see that $(X',Y')$ lies on $E'=D(k,1)$. Thus we obtain an isogeny $\rho_k:D(k+6, k^2+3k+9)\rightarrow D(k,1)$. We have $\rho(P_E)=P_{E'}$ and $\rho^* \omega_{E'}=3 \omega_E$. Hence
\[
 R_{E',\omega_{E'}}(P_{E'})=3 R_{E,\omega_E}([P]+[P+Q]+[P-Q]).
\]
Let $k\in\R$. Since the kernel of the isogeny does not contain real points, it preserves the real cycle. Therefore the pullback of $du$ on $E'$ is again $du$ on $E$ and 
\[
 R_{E'}(P_{E'}) = R_{E}([P]+[P+Q]+[P-Q]).
\]

Thus we obtain
\[
 n(-1) = \frac{9}{\pi} R_{E_n(-1)}([P]+[P+Q]+[P-Q]) = \frac{9}{\pi} R_{E_g(1)}(P),
\]
\[
 n(5) = -\frac{9}{2\pi} R_{E_n(5)}([P]+[P+Q]+[P-Q]) = -\frac{9}{2\pi} R_{D(5,1)}(P) = \frac{9}{2\pi} R_{E_g(-8)}(P).
\]
Note the sign change in the last equality. This is because $E_g(-8)=D(-10,-8)$ is isomorphic to $D(5,1)$ but the orientation on the set of real points is opposite. 

\subsection{Isogenies of order $2$}
Let us now find an isogeny from $E=E_g(k)=D(k-2,k)$ (as in Section \ref{subs:case_g}) to another curve whose kernel is generated by $Q$ (now $Q$ has order $2$). We send a point $(X,Y)$ on $E$ to the point $(X', Y')=(\frac{X(X-k)}{X+1}, \frac{(Y+kX)(Y+X^2)}{(X+1)^2})$. Then $(X',Y')$ lies on the curve $D(-k-4,-k^2)$, which is isomorphic to $D(-\frac{8}k-2,-\frac{8}k)=E_g(-\frac8k)$. For $k\in\R$ the map $E_g(k)\rightarrow D(-k-4,-k^2)$ does not change the orientation, while the isomorphism $D(-k-4,-k^2)\rightarrow E_g(-\frac8k)$ changes the orientation when $k<0$. 

The curves $E_g(1)$ and $E_g(-8)$ are connected by such isogenies. Let $\rho_1:E_g(1)\rightarrow E_g(-8)$ and $\rho_2:E_g(-8)\rightarrow E_g(1)$. $\rho_1$ does not change the orientation. The set $E_g(1)(\R)$ has one connected component ($\tau\in \frac12+\I\R$). Therefore $\rho_1$ restricted to this set is a double covering which means that $\rho_1^* du = 2 du$. The set $E_g(-8)(\R)$ has two connected components ($\tau\in \I\R$). Therefore $\rho_2$ is an (orientation-reversing) isomorphism on each connected component and $\rho_2^* du = - du$. We obtain
\[
 R_{E_g(-8)}(P) = 2 R_{E_g(1)}([P]+[P+Q]), \qquad R_{E_g(1)}(P) = - R_{E_g(-8)}([P]+[P+Q]).
\]
Using these equalities we express all the Mahler measures we need except $g(7)$ in terms of dilogarithms on $E_g(1)$ (for $n(-1)$ see above):
\[
 g(-8) = \frac{3}{\pi} R_{E_g(-8)}([P]-[P+Q]) = \frac{3}{\pi}R_{E_g(1)}(5[P]+4[P+Q]),
\]
\[
 g(1) = \frac{3}{\pi} R_{E_g(1)}(-[P]+[P+Q]),
\]
\[
 n(5) = \frac{9}{\pi} R_{E_g(1)}([P]+[P+Q]).
\]

Now we use the relation $2R_{E_g(1)}(P+Q) = 5R_{E_g(1)}(P)$, which was obtained in Section \ref{subs:gamma16}, to eliminate $R_{E_g(1)}(P+Q)$ and list what we have obtained so far:
\begin{align*}
 g(-8) &= \frac{45}{\pi} R_{E_g(1)}(P),\qquad &g(1) &= \frac{9}{2\pi} R_{E_g(1)}(P), \\
 n(5) &= \frac{63}{2\pi} R_{E_g(1)}(P), \qquad &n(-1) &= \frac{9}{\pi} R_{E_g(1)}(P).
\end{align*}
So the identities (\ref{eq:conjectures1}-\ref{eq:conjectures3}) and (\ref{eq:conjectures5}) are proved if we prove the following identity:
\[
 R_{E_g(1)}(P) = \frac{7}{9\pi}L(f,2).
\]

\subsection{Using Beilinson's theorem}
In this section $E=D(5,7)$.
The curve that is isomorphic to $X_0(14)$ is the curve $14A1$. It is given by the equation 
\[
 Y^2 + Y X + Y = X^3 + 4 X - 6.
\]
we make the change of variables $X'=X-2$, $Y'=Y-2X+2$ and obtain $D(5,7)$
\begin{equation}\label{eq:14a1}
 Y^2 + 5 X Y + 7 Y = X^3.
\end{equation}
This coincides with $E_n(-1)$ and $E_g(7)$. We will work with $E_g(7)$. We have the following points on the curve: $P=(0,0)$, $Q=(-1,-1)$, $-P=(0, -7)$, $P+Q=(7,-49)$, $-P+Q=(7,7)$.

The modular parametrization of (\ref{eq:14a1}) can be computed by PARI and is given by series which begin as
\begin{align*}
 X&=q^{-2}+q^{-1}+2 q + 2 q^2 + 3 q^3 +4 q^4-2q^5 + \cdots,\\
 Y&=-q^{-3} - 4q^{-2} - 6q^{-1} - 8 - 13q - 12q^2 - 14q^3 - 20q^4 +\cdots.
\end{align*}
The expansion of the corresponding newform of weight $2$ begins with
\[
 f = q - q^2 - 2 q^3 + q^4 + 2 q^6 + q^7 +\cdots.
\]

Let $E_2$ be the Eisenstein series of weight $2$ for $PSL_2(\Z)$:
\[
 E_2 =  1 - 24q - 72q^2 - 96q^3 - 168q^4 - 144q^5 - 288q^6 - 192q^7+\cdots.
\]
We verify that
\begin{align*}
 \frac{d}{2\pi\I d\tau} \log (X+1) &= \frac{1}{24}(-E_2(\tau)+2E_2(2\tau)+49 E_2(7\tau)-98 E_2(14\tau)),\\
 \frac{d}{2\pi\I d\tau} \log Y &= \frac{1}{24}(-4E_2(\tau)+16E_2(2\tau)+28 E_2(7\tau)-112 E_2(14\tau)),\\
 \frac{d}{2\pi\I d\tau} \log (Y+7X) &= \frac{1}{24}(3 E_2(\tau)+2E_2(2\tau)+21 E_2(7\tau)-98 E_2(14\tau)),
\end{align*}
which shows that the images of the cusps are the points $0$, $P$, $Q$ and $P+Q$ (because 
\begin{multline*}
(X+1)=2[Q]-2[0],\; (Y)=3[P]-3[0], (Y+7X)=2[P+Q]+[P]-3[0].)
\end{multline*}

Consider the involutions $w_2$ and $w_7$ (see Section \ref{sec:beilinson}). We have $a(2) = -1$, $a(7)=1$, therefore $\gamma(w_2) = 1$, $\gamma(w_7) = -1$. Since $\gamma(w_2)$ preserves the holomorphic differential it must act as the shift by $Q$. Therefore $w_2(0) = Q$, $w_2(P) = P+Q$. Then $w_7$ is either $x\rightarrow P-x$ or $x\rightarrow P+Q-x$. To find out which one it is we look at the identity
\[
 \frac{d}{2\pi\I d\tau} \log Y = E_2|_2(\frac{1}{6}(-1 + 2 w_2 + w_7 - 2 w_2 w_7)).
\]
This implies that ($d$ was defined in Section \ref{sec:beilinson}):
\begin{multline*}
 (Y) = d\frac16(-1 + 2 w_2 + w_7 - 2 w_2 w_7)[0] =-\frac{1}{6}(1+2w_2)(1+7w_7)(1-2 w_2)(1-w_7) [0] \\= -3(1-w_7)[0]. 
\end{multline*}
From here we see that $w_7(0) = P$, $w_7: x\rightarrow P-x$.

Now we may apply (\ref{eq:beilinson:3}) for $\alpha = 3 w_7 - 3$ and $\beta = 2 w_2 - 2$. We get 
\[
\alpha' = -\frac{(1-2w_2)(1-w_7)}6,\qquad \beta' = -\frac{(1-w_2)(1-7w_7)}{24},
\]
\[
 \gamma^*(\beta') = -\frac{(1-w_2)(1+7w_7)}{24},\qquad \alpha' \gamma^*(\beta') = -\frac{(1-w_2)(1-w_7)}{8}.
\]
This gives $\varepsilon(w_{14} \alpha' \gamma^*(\beta')) = -\frac18$, so
\[
 \langle r_E(\{F_\alpha,F_\beta\}), \omega \rangle = \frac{252}{\pi} L(f,1) L(f,2).
\]
Now we use $L(f,1) = \frac{\Omega_\R}{6}$. Since $\omega = \Omega_\R du$ 
\[
 \langle r_E(\{F_\alpha,F_\beta\}), du \rangle = \frac{42}{\pi} L(f,2).
\]

Expressing the left hand side in terms of elliptic dilogarithms, since $(F_\alpha) = 3 ([P] - [0])$, $(F_\beta) = 2([Q]-[0])$, $(F_\alpha)\ast(F_\beta)^- = 6 ([P+Q] - [P])$:
\[
 R_{E_g(7)}([P+Q]-[P]) = \frac{7}{\pi} L(f,2).
\]

This proves (\ref{eq:conjectures4}). Indeed, by (\ref{eq:deninger_g}) 
\[
 g(7) = -\frac{3}{\pi} R_{E_g(7)}([P] - [P+Q]).
\]
thus $g(7) = \frac{21}{\pi^2} L(f,2)$.

\subsection{Depth-first search}
Finally we need to relate the values $R_{E_g(7)}([P] - [P+Q])$ and $R_{E_g(1)}(P)$. First we construct an isogeny of order $3$ from $E_g(1)$ to $E_g(7)$. One can verify that the map
\[
 \rho(X,Y) = (X-2-X^{-1}+X^{-2}, Y(1 + X^{-2} - 2 X^{-3}) - 3 X + 2 + 2 X^{-1} - X^{-2} - X^{-3})
\]
is the required isogeny. Next we find $\rho^{-1} P$ and $\rho^{-1} (P+Q)$. The kernel of $\rho$ is generated by $P$. Note that $\rho(Q) = Q$. Let $\xi$ be a root of the polynomial $t^3-2 t^2 - t + 1$. Then $A=(\xi,\xi^2-1)$ is one of the preimages of $P$. One can check that $A$ has order $9$ and $3A = P$. The other preimages of $P$ are $4A$ and $7A$. Note also that $\rho$ on the set of real points preserves the orientation and is a $3$-fold covering, so $\rho^* du = 3 du$. Thus we obtain:
\[
 R_{E_g(7)}([P+Q] - [P]) = 3 R_{E_g(1)}(-[A]-[4A]-[7A]+[A+Q]+[4A+Q]+[7A+Q]).
\]

Therefore to complete the proof of (\ref{eq:conjectures1}-\ref{eq:conjectures3}) and (\ref{eq:conjectures5}) we need to prove the following:
\[
 3 R_{E_g(1)}(P) = R_{E_g(1)}(-[A]-[4A]-[7A]+[A+Q]+[4A+Q]+[7A+Q]),
\]
or, in a slightly different form,
\[
 R_{E_g(1)}([A] - [2A] + 3[3A] + [4A] - [A+Q] + [2A+Q] - [4A+Q]) = 0.
\]

We will search for parallel lines as in Section \ref{sec:search}. As a finite subgroup $Z$ we choose the subgroup generated by $A$ and all $2$-torsion points. It appears that all points of $Z$ are defined over the field generated by $7$-th roots of unity. So let $\gamma$ be a primitive $7$-th root of unity. Then we choose $\xi=\gamma+\gamma^{-1}+1$. The $X$-coordinates of the $2$-torsion points different from $Q$ are given by $\frac{3\pm\sqrt{-7}}8$. Let $Q'$, $Q''$ be the points corresponding to plus nd minus signs respectively. We choose $1+2\gamma+2\gamma^2+2\gamma^4$ as the square root of $-7$. This information is enough to determine the coordinates of all points from $Z$.

%
Using a computer search we found several pairs of parallel lines. Take the line passing through $A$, $A+Q'$, $-2A+Q'$ and the line passing through $2A$, $3A+Q''$, $4A+Q''$. Both lines have (negative) slope $-\gamma-1$. Therefore $R_E$ vanishes on the following divisor:
\begin{multline*}
 D_1 = -4[A]+3[2A]-[2A+Q]+[4A+Q]-4[A+Q']+3[2A+Q']-[4A+Q']-\\ [2A+Q''] + 2[3A+Q''] + 3[4A+Q''].
\end{multline*}
If we apply an element of the Galois group of $\Q(\gamma)/\Q$ to $D_1$ we also obtain a divisor in the kernel of $R_E$. Therefore we may take the trace:
\[
 \Tr D_1 = -7 \Tr [A] + 2 \Tr[A+Q] - 4 \Tr[A+Q'] + 2 \Tr[3A+Q'] \in \kernel R_E.
\]
Using the multiplication by $2$ endomorphism of $E$ we have the following relation for any $p\in E(\C)$:
\[
 R_E(-[2p]+2[p]+2[p+Q]+2[p+Q']+2[p+Q''])=0.
\]
Putting $p=A$ and taking trace we obtain:
\[
 3\Tr[A] + 2\Tr[A+Q] + 4 \Tr[A+Q'] \in \kernel R_E. 
\]
For $p=3A$, similarly:
\[
 3\Tr[3A] + 2\Tr[3A+Q] + 4 \Tr[3A+Q'] \in \kernel R_E.
\]
Therefore we may eliminate $\Tr[A+Q']$ and $\Tr[3A+Q']$ from $\Tr D_1$:
\[
 -8\Tr[A] + 8\Tr[A+Q] - 3\Tr[3A] - 2 \Tr[3A+Q] \in \kernel R_E.
\]
This means
\[
 -16([A]+[4A]+[7A]) + 16 ([A+Q]+[4A+Q]+[7A+Q]) - 18 [3A] - 12[3A+Q] \in \kernel R_E.
\]
Recall that $R_E(3A+Q) = \frac52 R_E(3A)$. We obtain
\[
 -([A]+[4A]+[7A]) + [A+Q]+[4A+Q]+[7A+Q] - 3 [3A] \in \kernel R_E.
\]
This completes the proof.

Another pair of parallel lines is $(A,A,-2A)$ and $(-4A+Q,2A+Q',2A+Q'')$, but taking trace of the corresponding divisor does not produce any non-trivial relation.


\end{document}